\newcommand{\spc}{\hspace*{0.25in}}
\definecolor{PineGreen}{rgb}{0.0,0.47,0.44}
\definecolor{MidnightBlue}{rgb}{0.1,0.1,0.44}
\definecolor{magenta}{rgb}{1.0,0.0,1.0}
\definecolor{bl1}{HTML}{4479A1}
\definecolor{pur1}{HTML}{52196D}
\definecolor{mag1}{HTML}{2AD0F1}
\definecolor{org1}{rgb}{.92,.39.21}
\definecolor{pur2}{rgb}{.53,.47,.7}
\newcommand{\C}{\mathbb{C}}
\newcommand{\R}{\mathbb{R}}
\newcommand{\CC}{\mathbb{C}}
\newcommand{\VV}{\mathbb{V}}
\newcommand{\N}{\mathbb{N}}
\newcommand{\Z}{\mathbb{Z}}
\newcommand{\sJX}{\mathsf{J}_X}
\newcommand{\cA}{\mathcal{A}}
\newcommand{\cB}{\mathcal{B}}
\newcommand{\cC}{\mathcal{C}}
\newcommand{\rest}[1]{\ensuremath{\left.#1\right|}}
\newcommand{\HJs}{T_J^*}
\DeclareMathOperator{\conv}{conv}
\numberwithin{equation}{section}
\newtheorem{theorem}{Theorem}
\newtheorem{proposition}[theorem]{Proposition}
\newtheorem{example}[theorem]{Example}
\newtheorem{lemma}[theorem]{Lemma}
\newtheorem{definition}[theorem]{Definition}
\title{Stratification of Projection Maps From Toric Varieties}
\author{Boulos El Hilany}
\author{Martin Helmer}
\author{Elias Tsigaridas}
\date{\today}
\begin{document}

\begin{abstract}
We prove a combinatorial version of Thom's Isotopy Lemma for projection maps applied to any complex or real toric variety.  
 Our results are constructive and give rise to a method for associating the Whitney strata of the projection to the faces of the polytope of the corresponding toric variety. 
For all examples we produced, our resulting algorithm outperforms known general purpose methods in Helmer and Nanda (FoCM, 2022), and {\DH}inh and Jelonek (DCG, 2021) for computing map-stratifications.
\end{abstract}

\maketitle

\section{Introduction}

A Whitney stratification of a complex variety is its decomposition into simpler components called strata, which are smooth manifolds which fit together in a prescribed (and desirable) way. One can also expand this notion to include analytic maps, which gives rise to a piece-wise fibration. This procedure was first introduced to study singularities of analytic maps and to classify manifolds~ \cite{golubitsky,whitney1965tangents,levine2006singularities}. We can also extend this decomposition to apply to real (semi-)algebraic sets, where the corresponding algorithms are mainly dependent on (variants of) cylindrical algebraic decomposition~\cite{Mather2012};
in this case the applications include, among others, mathematical physics, PDEs, optimization, and data science, e.g., \cite{hnFOCM,pflaum2001analytic,dhinh2019thom, dickenstein2019multistationarity}.

We can express the Whitney stratification of a variety, or a map, using the language of the first order theory of the reals. This results in algorithms for computing the stratification based on quantifier elimination techniques. Their worst case complexity bound is  singly exponential with respect to the number of variables (see e.g., \cite{rannou1991complexity,rannou1998complexity}), which would be an optimal scenario for implementations.
Nevertheless, to the best of our knowledge,
such implementations do not exist. This is so, mainly, due to the nature of quantifier elimination algorithms which rely on infinitesimals for computations. In turn, it contributes to a general lack of practicality of such algorithms. In practice all computer implementations of real quantifier elimination instead rely on cylindrical algebraic decomposition, which both has doubly exponential worst case complexity (with respect to the number of variables) and tends to exhibit this worst case behavior on the majority of examples.

Using a different approach,
{\DH}inh and Jelonek \cite{dhinh2019thom}
proposed a technique to compute the Whitney stratification of a polynomial map over an affine algebraic variety using asymptotic critical values of polynomial maps. Their methods
are based on Gr\"obner basis computations from elimination theory. However, the resulting algorithm becomes almost unusable in practice as the number of  variables needed in intermediate computations is over four times the number of variables appearing in the input.  
Rather recently, Helmer and Nanda \cite{hn,hnFOCM,hnFOCMCorrection} overcame this obstacle by relying on the relation of Whitney stratification with the conormal varieties \cite{le1988limites,FTpolar}. Their algorithms only doubles the number of variables and even though it has double exponential worst case (due to Gr\"obner basis computations) it is efficient in practice;
we refer the interested reader to \cite{hn} for further details and experimental results.
We should also point out that some of the above results were very recently extended to real algebraic varieties~\cite{hnReal}.
We notice that for a restricted class of real varieties, Vorobjov \cite{vorobjov1992effective} suggested a specialized algorithm  for Whitney stratification 
 based on quantifier elimination, that suffers from the same (practical) drawbacks as the similar algorithms for complex varieties. 

The above-mentioned algorithms  apply to all complex and real  algebraic varieties and can be extended to polynomial maps over them. Furthermore, no restriction or assumption on the input is required. As a drawback, however, one is forced to disregard  
any symmetry, combinatorics, or geometry of the objects at hand. 
The latter structure is an important characteristic of
varieties emanating from applications. 
Accordingly, a reasonable approach to accelerate the computationally intensive Whitney stratification algorithms is to study the relationship between strata and structure.

We consider the problem of computing a Whitney stratification of projection maps on toric affine varieties. 
Accordingly, we obtain a 
stratification of the parameter space of any family of affine varieties, whose ideal is generated by binomial equations. Thus, each stratum in the range corresponds to a subset of diffeomorphic binomial varieties. Such varieties appear frequently, for example, in chemistry as mathematical models describing chemical reaction networks~\cite{adamer2019complexity,perez2012chemical}.

A Whitney stratification method for a (real and complex) toric variety $X_{\cA}$ corresponding to any finite set $\cA\subset\Z^{\nu}$ of lattice points already exists in~\cite[Theorem 5.3.~\S 11.5.B]{GKZ}. Namely, each stratum coincides with a $k^*$-orbit (with $k=\R_{\geq 0}$ or $k=\CC$) of $X_{\cA}$. In turn the faces of ${\cA}$ provide an accurate description for the strata of $X_{\cA}$. Accordingly, any projection map $\pi:X_{\cA}\longrightarrow k^m$, which forgets some coordinates, is expected to have a similar stratified description.

We consider any pair $(X_{\cA},~\pi)$ as above and we describe a subdivision of the space $k^m$, for which the restricted projection is a trivial fibration over each stratum. Altogether, the strata are in bijection with the set of faces of $\conv({\cA})$. Our work gives rise to an algorithm that takes $X_{\cA}$ as an input (either using ${\cA}$ or the corresponding binomial representation) and produces a subdivision of $X_{\cA}$ and of $k^m$. The subdivision of $X_{\cA}$ arises from set differences of toric varieties  $X_{\cB}$, for some subset ${\cB}\subset{{\cA}}$ that lies in a face of the polytope $\conv(\cA)$, and the subdivision of $k^m$  arises from the set difference of coordinate subspaces of $k^m$ (of different dimensions) determined by the faces of $\conv({\cA})$.

Our main result, together with the necessary notations, are presented in~\S\ref{sec:main-thm+alg}; it implies that the face-lattice of $\conv(\cA)$ plays an important role in computing the strata of the projection map. 
The running time of our resulting Algorithm~\ref{alg:algorithm} is dominated by (various) combinatorial tasks. Consequently, this provides a significant (overall) run-time speedup. We provide some experimental evidence of this by producing several examples in~\S\ref{sub:algorithm} and making a comparison Table~\ref{tab:runtimes}. 
In \S\ref{sec:main-thm+alg} we also present a detailed Example~\ref{ex:main} to illustrate  Theorem~\ref{thm:stratification_toric_projection} and the functionality of Algorithm~\ref{alg:algorithm}. 

In~\S\ref{sec:decomp-linear} 
we present the proof of our main Theorem~\ref{thm:stratification_toric_projection}; we show how to use the structure of toric ideals to construct diffeomorphisms between fibers of the projection over the same stratum.

\section{Statement of the Main Theorem and Resulting Algorithm}\label{sec:main-thm+alg}

We begin by briefly reviewing the required notations and definitions.

Let $X$ be an algebraic variety of dimension $d$. We say a flag $X_\bullet$ of varieties $X_0\subset \cdots \subset X_d=X$ is a {\em Whitney stratification} of $X$ if, for all $i$, $X_i-X_{i-1}$ is a smooth manifold such that {\em Whitney's condition B} holds for all pairs $M,N$, where $M$ is a connected component of $X_i-X_{i-1}$ and $N$ is a connected component of $X_j-X_{j-1}$. Such connected components are called {\em strata}. A pair of strata, $M,N$ with $\overline{M}\subset \overline{N}$, satisfy Whitney's condition B at a point $x\in M$ with respect to $N$ if: for every sequence $\{p_n\} \subset M$ and every sequence $\{q_n\}\subset N$, with $\lim p_n=\lim q_n=x$, we have $s\subset T$ where $s$ is the limit of secant lines between $p_n,q_n$ and $T$ is the limit of tangent plans to $N$ at $q_n$. We say the pair $M, N$ satisfies condition B if condition B holds, with respect to $N$, at all points $x\in M$.

 A stratification of a continuous map of topological spaces is defined as follows. 
\begin{definition}\label{def:stratmap3}
 Let ${X}_\bullet$ and ${Y}_\bullet$ be Whitney stratifications of topological spaces ${X}$ and ${Y}$. The pair $(X_\bullet, Y_\bullet)$ is a {\bf stratification} of a continuous function $\phi:{X} \to {Y}$ if for each stratum $M \subset {X}$ there exists a stratum $N \subset {Y}$ such that: 
		\begin{enumerate}
			\item the image $\phi(M)$ is wholly contained in $N$; and moreover,
		\item the restricted map $\rest{\phi}_M:M \to N$ is a smooth submersion, i.e.~its derivative $d(\rest{\phi}_S)_x:T_xM \to T_{\phi(x)}N$ is surjective at each point $x$ in $M$.
		\end{enumerate}
	\end{definition}
Our focus will in particular be on the stratification of a special class of polynomial maps. Consider a polynomial map $\phi:X\to Y$ between varieties $X$ and $Y$. {\em Thom's First Isotopy Lemma} \cite[Proposition 11.1]{Mather2012} shows that whenever $\phi$ is proper, the restriction $\rest{\phi}_M:M\to N$ is a $\cC^{\infty}$-fibration for each pair $(M,~N)$ of strata as in Definition~\ref{def:stratmap3}. 
While the maps of interest to us are almost never proper, we still obtain the analog of Thom's First Isotopy Lemma for a certain type of varieties which are given as follows.

 Let $k\in\{\R_{\geq 0},~\C\}$, and consider a parametric system of binomial equations which define a prime ideal. That is, work in a ring $k[x,c]$ where $x=x_1, \dots , x_n$ are thought of as variables and $c=c_1, \dots, c_m$ are thought of as parameters. Let $A\in\Z^{\nu\times (m+n)}$ be the matrix representing a finite subset $\cA\subset\Z^\nu$ having $m+n$ points. Let $I_A=\langle f_1, \dots, f_r\rangle$ be a prime binomial ideal in $k[x,c]$ such that 
\begin{align}\label{eq:toric_definition}
X=X_A= & \VV(I_A)=\overline{\{(t^{a_1}, \dots, t^{a_{n+m}})~|~t\in (k^*)^\nu\}}^k,
\end{align} for some integer $\nu$, denoting the number of rows of $A$, and $a_i$ is the $i^{th}$ column of $A$. The closure in~\eqref{eq:toric_definition} refers to the Zariski closure in the affine variety $k^{m+n}$. 
Then, for each 
parameter choice $p\in k^m$, we obtain a variety $$X(p)=\VV(f_1(x,p), \dots, f_r(x,p))\subset k^n.$$ If $X$ is given as in~\eqref{eq:toric_definition}, it will be called an \emph{affine toric varietty}, whereas the variety $X(p)$ above will be referred to as \emph{affine binomial} -- the latter type specializes to the former whenever, e.g., we have $p=(1,\ldots,1)$. 

Our goal is to study toric varieties given by parametric systems of binomial equations above and obtain a stratification of the parameter space $Y=k_c^m$ such that for each 
stratum $S$ of $Y$, and for any $p,q\in S$, the two binomial varieties $X(p)$ and $X(q)$ are diffeomorphic.

Let $J\subset[n+m]$, and let $T_J$ denote the coordinate subspace of $k^{n+m}$ given by:
\begin{align}\label{eq:torus}
T_J := &~\{y:=(x,c)\in k^{n+m}~|~j\in J\Longrightarrow y_j=0\}.
\end{align} We use the convention $T_{\emptyset} = k^{n+m}$. 
With these notations in hand we now state our main result, the proof of which is postponed to~\S\ref{sec:decomp-linear}.

\begin{theorem}\label{thm:stratification_toric_projection} Let $X$ be
the affine 
toric  
variety in $k^{n+m}$ defined by a prime binomial ideal $I_A=\langle f_1, \dots, f_r\rangle$  in $k[x,c]$, treating $x$ as variables and $c$ as parameters with $k_c^m$ the space of parameters and let $\pi:X\to k^m$ be the coordinate projection $(x,c)\mapsto c$ from $X$ with $Y=\overline{\pi(X)}$. Use the notations above and
consider the two decompositions $X_\bullet:=\{X\cap T_J\}_J$ and $Y_\bullet:=\{\pi(X\cap T_J)\}_J$ of the topological spaces $X$ and $\pi(X)$ respectively. Then the following statements are true:

\begin{enumerate}
	\item\label{it:loc-triv_fib} for each (connected) strata $S$ of $X_{\bullet}$, we have $\rest{\pi}_S$ is a $\cC^{\infty}$-fibration, and
	
	\item\label{it:two_axioms}  the triple $(X_\bullet,~Y_\bullet,~\pi)$ satisfies the two axioms of Definition~\ref{def:stratmap3}.

\end{enumerate}
\end{theorem}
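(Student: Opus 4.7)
The plan is to exploit the combinatorial orbit structure of the affine toric variety $X$ together with the torus-equivariance of the coordinate projection $\pi$. The natural torus $(k^*)^\nu$ acts on $k^{n+m}$ by $t \cdot y_i = t^{a_i} y_i$, and by \cite[Theorem 5.3, \S 11.5.B]{GKZ} this action preserves $X$ and decomposes it into finitely many orbits, one orbit $O_F$ per face $F$ of $\conv(\cA)$; each orbit is a smooth torus, and the orbit decomposition is already known to be a Whitney stratification. The first step is to match this with the decomposition in the statement: given $J \subset [n+m]$, I would show that $X \cap T_J$ is nonempty exactly when $\{a_j : j \notin J\}$ spans a face $F_J$ of $\conv(\cA)$, in which case $X \cap T_J$ is the closure (inside $T_J \cong k^{n+m-|J|}$) of the orbit $O_{F_J}$, hence is itself a smaller toric variety. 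The strata $X \cap T_J \setminus \bigcup_{J' \supsetneq J} (X \cap T_{J'})$ therefore agree precisely with the $(k^*)^\nu$-orbits of $X$, so Whitney condition B for $X_\bullet$ is inherited from the GKZ decomposition.

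Next I would show that $\pi(X)$ is itself an affine toric variety and that $\pi$ sends each orbit $O_F \subset X$ onto a single orbit of the induced $(k^*)^\nu$-action on $k^m$ (obtained from the parameter columns of $A$). This gives that every $\pi(X \cap T_J)$ is the closure of a torus orbit in $k^m$; in particular, the image strata are smooth submanifolds, and $Y_\bullet$ is a Whitney stratification of $\pi(X)$, again by applying \cite{GKZ} to $\pi(X)$. The containment of $\pi(O_F)$ in a single stratum of $Y_\bullet$, which is axiom (1) of Definition~\ref{def:stratmap3}, is then immediate from equivariance.

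For the fibration claim, I would fix a connected stratum $S = O_F$ and set $N = \pi(S)$. Since the restricted map $S \to N$ is a surjective equivariant morphism of torus orbits, its fibers are cosets of a fixed subtorus $K \subset (k^*)^\nu$ (the kernel of the induced map $(k^*)^\nu \to N$), so all fibers are pairwise diffeomorphic. To trivialize locally around $c \in N$, I would pick a smooth local section $\sigma : U \to (k^*)^\nu$ of the free transitive quotient action of $(k^*)^\nu/K$ on $N$, with $\sigma(c') \cdot c = c'$ for $c' \in U$; then $(c', y) \mapsto \sigma(c') \cdot y$ provides a smooth trivialization $U \times \pi^{-1}(c) \cap S \to \pi^{-1}(U) \cap S$. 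This proves statement (1), and the same torus-equivariance forces the derivative $d(\pi|_S)$ to be surjective everywhere on $S$, giving the submersion axiom (2) of Definition~\ref{def:stratmap3}.

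The principal obstacle I anticipate is the first step: showing that intersecting $X$ with a coordinate subspace $T_J$ does not introduce spurious components outside the face-labeled orbit closure $\overline{O_{F_J}}$, and that the nonempty intersections $X \cap T_J$ are themselves prime toric varieties supported on a face sub-configuration of $\cA$. This is precisely the point where the primeness and binomial structure of $I_A$ are essential, and I expect this is the combinatorial-algebraic content carried out in \S\ref{sec:decomp-linear}; once it is in hand, the remaining pieces are essentially formal consequences of torus equivariance.
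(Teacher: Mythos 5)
Your proposal is correct in outline and takes a genuinely different route than the paper's own proof. Where the paper works directly with the binomial presentation of $I_A$ and the parametrization $\Phi_A$, you exploit the $(k^*)^\nu$-action on $k^{n+m}$ and its orbit decomposition. For the submersion axiom, the paper explicitly computes the Jacobian of the pair $(X^*, F)$ from the binomial generators and shows the matrix $[\sJX \mid M]$ has full rank on $(k^*)^{n+m}$; you instead get this from equivariance, since composing $\pi|_S$ with the (submersive) orbit map of $S$ yields the (submersive) orbit map of $N$. For the fibration claim, the paper splits into two cases according to whether $\nu \leq m$ or $\nu > m$: in the first case it proves $F$ is proper by a fiber-count argument (Lemma~\ref{lem:f-proper}) and invokes Thom's First Isotopy Lemma, and in the second case it constructs, by solving a monomial system for a group element $s$, an explicit diffeomorphism $s\cdot F^{-1}(1,\ldots,1) = F^{-1}(c)$ (Lemma~\ref{lem:diffeomorph}). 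Your use of a smooth local section $\sigma$ of the orbit map on $N$ and the resulting trivialization $(c',y)\mapsto \sigma(c')\cdot y$ avoids this case split entirely, and in particular makes the smooth dependence of the fiber diffeomorphism on the base point (which the paper's Lemma~\ref{lem:diffeomorph} leaves implicit) automatic. Both routes share the same combinatorial backbone, the correspondence between $X\cap \HJs$ and faces of $\conv(\cA)$ (the paper's Lemma~\ref{lem:known_theorem}, quoting~\cite{CLO}), which you rightly flag as the nontrivial preliminary step. One minor imprecision: the image $\pi(X\cap T_J)$ under a coordinate projection need not be Zariski-closed, so it is not literally an orbit closure; but your argument only uses that the open strata of $Y_\bullet$ are single orbits, which suffices.
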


On the one hand, the image of $X$ under the projection $\pi$ is an affine toric variety $X_{\cB}\subset k^m$ for some subset $\cB\subset\cA$. Indeed, the projection retains only the coordinates $(t^{a_i})_{i\in J}$ for some subset $J\subset [m+n]$. On the other hand, it is known (see, e.g.~\cite[Proposition 3.2.9]{CLO}) that, if $\Gamma$ is a face of $\conv(\cA)$, then any toric variety of the form $X_{\Gamma\cap\cA}$ coincides with the restriction of $X$ to a coordinate subspace of $k^{m+n}$; note that $\Gamma \cap \cA$ includes all columns of $\cA$ (written as a matrix) which are contained in $\Gamma$, they need not be vertices of $\Gamma$ nor of $\conv(\cA)$. Therefore, thanks to Theorem~\ref{thm:stratification_toric_projection}, for each stratum of $X$, its projection is a toric variety of the form $X_{\cB}$, where the set $\cB$ is a subset of the set $\Gamma\cap\cA$ given by the coordinates of the projection, i.e.~it is defined by the entries of $\Gamma\cap\cA$ which are retained after projection.

\subsection{Algorithm}\label{sub:algorithm} Using the notations defined above, we derive from Theorem~\ref{thm:stratification_toric_projection} the following algorithm to stratify the projection map $\pi: X\to Y$ from the toric variety $X$ (treating $x$ and $c$ as variables) onto (the closure of) its image $Y=\overline{\pi(X)}$ in the parameter space $k_c^m$. The resulting stratification $(X_\bullet, Y_\bullet)$ is then such that 
the restriction $\rest{\pi}_R:R\to S$ is a $\cC^{\infty}$-fibration for each pair $(R,~S)$ of strata in $(X_\bullet, Y_\bullet)$. 
In particular, the topology of the fiber is constant on each stratum of $Y$.  

As in the discussion following Theorem \ref{thm:stratification_toric_projection} we suppose $X=X_{\cA}$ is the toric variety of interest and $Y=X_{\cB}=\overline{\pi(X)}$ is the toric variety arising from the projection. Note that since $\cB \subset \cA$, then for a face $\Gamma$ of $\conv{\cA}$ the intersection $\cB \cap \Gamma$ is well defined; namely, if $\cB \cap \Gamma=\emptyset$ then the corresponding toric variety  $X_{\cB \cap \Gamma}$ is also empty. Finally, note that, in particular, since the projection is expressed as $(x,c)\mapsto c$, the set $\cB$ corresponds to the last $m$ rows of the matrix $A$ associated to our set $\cA$. 
	\begin{center}
\begin{table}[htbp]  \centering
			
		\begin{tabular}{|r|l|}
			\hline
			~ & {\bf ToricStratMap}$(\pi: X\to Y)$ \\
			\hline
			~&{\bf Input:} The map $\pi: X\to Y$.\\
			~&{\bf Output:} A stratification $(X_\bullet, Y_\bullet)$ of $\pi$. \\
			\hline
			~1 & {\bf Set} $X_\bullet=\{X, \varnothing, \dots, \varnothing\}$, $Y_\bullet=\{Y, \varnothing, \dots, \varnothing\}$\\ 
   ~2 & {\bf For each} $f_i=z^l\pm z^k$ {\bf set} $b_i=l-k$\\
    ~3 & {\bf Set} $V=[v_{1}, \dots, v_r]^T$ be the matrix with rows $v_i$ and {\bf set} $A=\ker V$\\
     	~4 & {\bf Set} $P={\rm conv}(\cA)$\\
      ~5 & {\bf Set} $\cB\subset \cA$ to be subset given by the last $m$ rows of $A$\\
     	~6 & {\bf For each} face $\Gamma$ of $P$ {\bf do:} \\
      	~7 & \spc $d=\dim(X_{\Gamma\cap \cA})$\\
      		~8 & \spc $X_{d}=X_d \cup X_{\Gamma\cap \cA}$\\
        	~9 & \spc $e=\dim(X_{\Gamma\cap \cB})$\\
      		~10 & \spc $Y_e=Y_e \cup X_{\Gamma\cap \cB}$\\
			11 & {\bf Return} $(X_\bullet,Y_\bullet)$\\
			\hline
		\end{tabular}\caption{Algorithm arising from Theorem~\ref{thm:stratification_toric_projection}.}  \label{alg:algorithm}
		\end{table}

	\end{center}\medskip

\subsection{Runtime Comparisons}
We now compare Algorithm~\ref{alg:algorithm} to the general purpose algorithms appearing in \cite{hnFOCMCorrection,helmer2024new} on several instances. We then conclude this section by presenting Example~\ref{ex:main} that illustrates the functionality of Algorithm~\ref{alg:algorithm}.   Since the new algorithm, Algorithm~\ref{alg:algorithm}, 
follows closely 
the combinatorial structure of the $A$-matrix and the simple description of toric varieties, a better performance is expected on examples with large numbers of generators, variables, and higher degrees relative to the general purpose symbolic algorithms which apply to a much broader class of examples.

\begin{table}[htbp]  \centering
  \resizebox{\linewidth}{!}{
    \begin{tabular}{lccc}
        \toprule
          {\bf Input}     \quad~\quad~\quad~\quad~\quad~\quad~\quad~\quad~          &~~~~~~~~\quad~~  {\bf ToricStratMap} ~~~~~ \quad            & \bf Symbolic Alg.~of \cite{hnFOCMCorrection}  ~~~~~  \quad          & \bf Symbolic Alg.~of \cite{helmer2024new} ~~~~~ \quad                 \\
        \midrule
                 $ \pi:\VV(I_1)\to\CC$, see \eqref{eq:TableEx1}                  & 0.08s        &      0.14s  & 0.12s              \\
         $ \pi:\VV(I_2)\to\CC^3$, see \eqref{eq:TableEx2}                  & 0.7s        &      --  & 3.0s              \\
                           $ \pi:\VV(I_3)\to\CC^4$, see \eqref{eq:TableEx3}                  & 3.4s        &      --  & 5.7s              \\
                  $ \pi:\VV(I_4)\to\CC^4$, see \eqref{eq:TableEx4}                  & 26.1s        &      --  & --              \\
            $\pi:\VV(I_5)\to\CC^4$, see \eqref{eq:TableEx5}   & 712.5s   &      --  & --   \\
        \bottomrule
    \end{tabular}}
      \caption{Runtimes for the new combinitorial algorithm on several toric map examples. The runtimes of the symbolic alternative algorithms use version 2.11 of the \texttt{WhitneyStatifications} Macualay2 package. All examples are run in Macualay2 version 1.24.05. In the table, -- denotes examples which take greater than 8 hours to run. Our implementation of {\bf ToricStratMap} used in the tests above is available at: \url{http://martin-helmer.com/Software/WSToric.m2} }
  \label{tab:runtimes}
\end{table}
\medskip
We remark that we do not include the algorithm of {\DH}inh and Jelonek \cite{dhinh2019thom} in Table \ref{tab:runtimes} since the algorithm of \cite{dhinh2019thom} was unable to compute a stratification for the inputs above in under eight hours; in fact the algorithm of \cite{dhinh2019thom} was also not able to compute a Whitney stratification of the Whitney umbrella without any maps involved in 24 hours (see the discussion in~\cite{hnFOCM,hnFOCMCorrection}).

Below we list the defining equations for the examples appearing in Table \ref{tab:runtimes}. As before we consider an ideal $I$ in $k[x_1,\dots, x_n,c_1, \dots, c_m]$ (in our case always a binomial ideal) where we think of the $n$-tuple $x$ as variables and the $m$-tuple $c$ as parameters. Now, if we treat both $x$ and $c$ as variables, this defines a variety $X=\VV(I)\subset k^{n+m}$ and our goal is then to stratify the projection map  $\pi: X\to k^m_c$, $(x,c)\mapsto c$.  
Below we abuse notation by writing this in more compact form as $\pi:\VV(I)\to k^m_c$. Note that in all the examples we consider, the map $\pi$ is surjective onto the parameter space, hence we always have $Y=k^m_c$.

Our first example, $I_1$, is derived from the Whitney umbrella. Examples $I_3$ and $I_5$ come from chemical reaction network theory; the example $I_3$ comes from the two-site kinetic proofreading model, \cite[Example~2.6]{adamer2019complexity}, while $I_5$ comes from the two-site phosphorylation system as given in \cite[Example 3.13]{perez2012chemical}. In the cases arising from chemical reaction networks, the $c_i$s represent (unknown) reaction rate constant parameters while the $x_i$ represent chemical concentrations, see, e.g.~\cite[\S2.3]{adamer2019complexity}, or \cite[\S2]{perez2012chemical}, and the references therein. 
\begin{equation}
    \pi:\VV(I_1)\to\CC_c. \;\; \VV(I_1)\subset \CC^3 ,  \;\; I_1=\langle cx_{2}^{2}-x_{1}^{2}\rangle. \label{eq:TableEx1}
\end{equation}

\begin{equation}
    \pi:\VV(I_2)\to\CC_c^3. \;\; \VV(I_2)\subset \CC^6 ,  \;\; I_2=\langle c_2c_3x_3-c_1x_2,c_3x_1^2-x_2^2,c_1x_1^2-c_2x_2x_3 \rangle. \label{eq:TableEx2}
\end{equation}

\begin{equation}
    \pi:\VV(I_3)\to\CC_c^4. \;\; \VV(I_3)\subset \CC^8 ,  \;\; I_3=\langle c_{3}x_{1}-c_{4}x_{2},\,c_{1}x_{3}x_{4}-c_{2}x_{1}\rangle. \label{eq:TableEx3}
\end{equation}

\scriptsize
    \begin{equation}
        \pi:\VV(I_4)\to\CC_c^4. \;\; \VV(I_4)\subset \CC^9 ,  \;\; I_4=\langle c_{1}x_{3}-x_{4},\,x_{1}^{2}x_{3}-x_{2}^{2}x_{4},\,c_{1}x_{2}^{2}-x_{1}^{2},\,c_{3}x_{3}x_{5}^{2}-x_{1}^{3},\,c_{1}x_{1}^{3}-c_{3}x_{4}x_{5}^{2}\rangle. \label{eq:TableEx4}
    \end{equation}

    \begin{equation}
 \begin{split}       \pi:\VV(I_5) &\to\CC_c^7. \;\; \VV(I_5)\subset \CC^{16} ,  \\    I_5= &\langle c_{3}x_{5}-c_{2}x_{7},\,c_{5}x_{4}-c_{4}x_{6},\,c_{3}x_{3}x_{9}-c_{6}x_{7},\,c_{2}x_{3}x_{9}-c_{6}x_{5},\,c_{5}x_{2}x_{9}-c_{7}x_{6},\,c_{4}x_{2}x_{9}-c_{7}x_{4},\,c_{6}x_{2}x_{8}-c_{6}x_{5},\\ &c_{3}x_{2}x_{8
     }-c_{2}x_{7},\,c_{2}x_{2}x_{8}-c_{2}x_{5},\,c_{7}x_{1}x_{8}-c_{1}x_{2}x_{9},\,c_{5}x_{1}x_{8}-c_{1}x_{6},\,c_{4}x_{1}x_{8}-c_{1}x_{4},\,c_{6}c_{7}x_{6}x_{8}-c_{5}c_{6}x_{5}x_{9},\\ &c_{3}c_{7}x_{6}x_{8}-c_{2}c_{5}x_{7}x_{9},\,c_{2}c
     _{7}x_{6}x_{8}-c_{2}c_{5}x_{5}x_{9},\,c_{6}c_{7}x_{4}x_{8}-c_{4}c_{6}x_{5}x_{9},\,c_{3}c_{7}x_{4}x_{8}-c_{2}c_{4}x_{7}x_{9},\\ &c_{2}c_{7}x_{4}x_{8}-c_{2}c_{4}x_{5}x_{9},\,c_{3}c_{7}x_{3}x_{6}-c_{5}c_{6}x_{2}x_{7},\,c_{1}c_{3}x_{2}x
     _{6}-c_{2}c_{5}x_{1}x_{7},\,c_{5}c_{6}x_{2}x_{5}-c_{2}c_{7}x_{3}x_{6},\\ &c_{5}c_{6}x_{1}x_{5}-c_{1}c_{6}x_{2}x_{6},\,c_{2}c_{5}x_{1}x_{5}-c_{1}c_{2}x_{2}x_{6},\,c_{3}c_{7}x_{3}x_{4}-c_{4}c_{6}x_{2}x_{7},\,c_{2}c_{7}x_{3}x_{4}-c_{4}
     c_{6}x_{2}x_{5},\\ &c_{1}c_{6}x_{2}x_{4}-c_{4}c_{6}x_{1}x_{5},\,c_{1}c_{3}x_{2}x_{4}-c_{2}c_{4}x_{1}x_{7},\,c_{1}c_{2}x_{2}x_{4}-c_{2}c_{4}x_{1}x_{5},\,c_{1}c_{6}x_{2}^{2}-c_{2}c_{7}x_{1}x_{3}
 \rangle.\end{split} \label{eq:TableEx5}
    \end{equation}
    \normalsize

\begin{example}\label{ex:main}
Let $k\in\{\R_{\geq 0},\C\}$. Consider a family $\cC:=\{C_c\}_{c\in k}\subset k^2$ of plane curves given by the vanishing locus of the ideals of the form 
\begin{equation}\label{eq:umbrella}
    cy^2-x^2. 
\end{equation} \begin{figure}[h]
    \centering
    \includegraphics[width=0.3\linewidth]{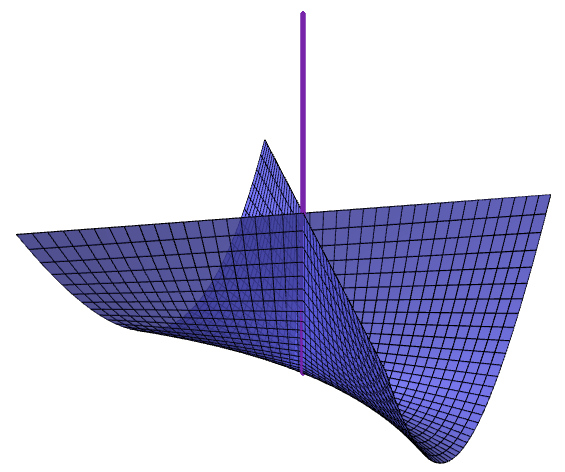}
    \caption{The Whitney umbrella $W\subset\R^3$ obtained from~\eqref{eq:umbrella}; it contains the  real toric variety $X=W\cap\R_{\geq 0}^3$ from Example~\ref{ex:main}.}
    \label{fig:umbrella}
\end{figure}
Then, the curve $C_c$ is diffeomorphic to $\{x = \pm y\}\cap k^2$ for any $c\neq 0$ in $k$, whereas $C_0$ has no other representative in $\cC$. 

This observation can be confirmed from a different point of view: The family $\cC$ consists of fibers $\left.\pi^{-1}\right|_{X}(c)$ under the map $\pi:X\longrightarrow k$, $\pi :(x,y,c)\mapsto c$, where $X\subset k^3$ is the vanishing locus of the ideal~\eqref{eq:umbrella} in $\Z[x,y,c]$ (see Figure \ref{fig:umbrella}). The latter is a toric ideal, whose $A$-matrix is 
\[
A:= ~ \begin{bmatrix}
 1&   0 & 2  \\
  1&  1 & 0 
\end{bmatrix}=\begin{bmatrix}
    a_1 & a_2 & a_3
\end{bmatrix}.
\] 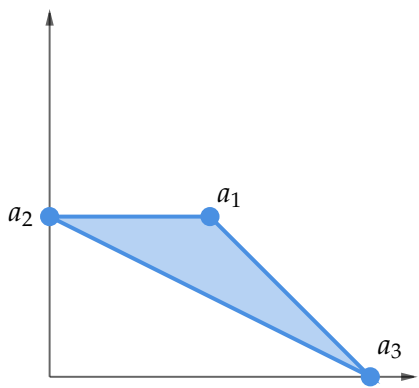
\begin{figure}[h]
    \centering
    \tikzset{every picture/.style={line width=0.75pt}} 

\begin{tikzpicture}[x=0.75pt,y=0.75pt,yscale=-1,xscale=1]

\draw [color={rgb, 255:red, 0; green, 0; blue, 0 }  ,draw opacity=0.6 ]   (129,243.67) -- (312.67,243.67) ;
\draw [shift={(314.67,243.67)}, rotate = 180] [fill={rgb, 255:red, 0; green, 0; blue, 0 }  ,fill opacity=0.6 ][line width=0.08]  [draw opacity=0] (8.4,-2.1) -- (0,0) -- (8.4,2.1) -- cycle    ;
\draw [color={rgb, 255:red, 0; green, 0; blue, 0 }  ,draw opacity=0.6 ]   (129,243.67) -- (129,60) ;
\draw [shift={(129,58)}, rotate = 90] [fill={rgb, 255:red, 0; green, 0; blue, 0 }  ,fill opacity=0.6 ][line width=0.08]  [draw opacity=0] (8.4,-2.1) -- (0,0) -- (8.4,2.1) -- cycle    ;
\draw  [color={rgb, 255:red, 74; green, 144; blue, 226 }  ,draw opacity=1 ][fill={rgb, 255:red, 74; green, 144; blue, 226 }  ,fill opacity=1 ] (286.4,243.67) .. controls (286.4,241.27) and (288.34,239.33) .. (290.73,239.33) .. controls (293.13,239.33) and (295.07,241.27) .. (295.07,243.67) .. controls (295.07,246.06) and (293.13,248) .. (290.73,248) .. controls (288.34,248) and (286.4,246.06) .. (286.4,243.67) -- cycle ;
\draw  [color={rgb, 255:red, 74; green, 144; blue, 226 }  ,draw opacity=1 ][fill={rgb, 255:red, 74; green, 144; blue, 226 }  ,fill opacity=1 ] (124.67,162.8) .. controls (124.67,160.41) and (126.61,158.47) .. (129,158.47) .. controls (131.39,158.47) and (133.33,160.41) .. (133.33,162.8) .. controls (133.33,165.19) and (131.39,167.13) .. (129,167.13) .. controls (126.61,167.13) and (124.67,165.19) .. (124.67,162.8) -- cycle ;
\draw  [color={rgb, 255:red, 74; green, 144; blue, 226 }  ,draw opacity=1 ][fill={rgb, 255:red, 74; green, 144; blue, 226 }  ,fill opacity=1 ] (205.53,162.8) .. controls (205.53,160.41) and (207.47,158.47) .. (209.87,158.47) .. controls (212.26,158.47) and (214.2,160.41) .. (214.2,162.8) .. controls (214.2,165.19) and (212.26,167.13) .. (209.87,167.13) .. controls (207.47,167.13) and (205.53,165.19) .. (205.53,162.8) -- cycle ;
\draw  [color={rgb, 255:red, 74; green, 144; blue, 226 }  ,draw opacity=1 ][fill={rgb, 255:red, 74; green, 144; blue, 226 }  ,fill opacity=0.4 ][line width=1.5]  (209.87,162.8) -- (290.73,243.67) -- (129,162.8) -- cycle ;

\draw (122.67,162.8) node [anchor=east] [inner sep=0.75pt]  [font=\small]  {$a_2$};
\draw (292.73,235.93) node [anchor=south west] [inner sep=0.75pt]  [font=\small]  {$a_3$};
\draw (211.87,159.4) node [anchor=south west] [inner sep=0.75pt]  [font=\small]  {$a_1$};

\end{tikzpicture}
    \caption{The set $\conv(\cA)\subset\R^2$ corresponding to Example~\ref{ex:main}.}
    \label{fig:triangle}
\end{figure} We can decompose $X$ into seven strata, each of which corresponds to a face of $P=\conv(\cA)$ (see Figure~\ref{fig:triangle}). The polytope $P$ has 3 vertices, which are the columns $a_1, a_2, a_3$ of the matrix $A$, and three edges $[a_3,a_1]=\conv{(a_3, a_1)}$, $[a_2,a_1]=\conv{(a_2, a_1)}$, and $[a_2,a_3]=\conv{(a_2, a_3})$. The collection of edges $[a_3,a_1]$, $[a_2,a_1]$, and $[a_2,a_3]$, gives rise to the affine toric varieties (which are all copies of $k$):
\begin{align*}
 X_{[a_3,a_1]}:= & ~X\cap \{y=0\}=\VV(x,y), \\ 
 X_{[a_2,a_1]}:= & ~X\cap \{c=0\}=\VV(x,c), \\
 X_{[a_2,a_3]}:= & ~X\cap \{x=0\}=\VV(x,y)\cup \VV(x,c).  
\end{align*} On the codomain side, the edge $[a_3,a_1]$ gives a copy of $Y=k$ (as does the whole polytope $P$), the edge $[a_2,a_1]$ gives a copy of $\VV(c)\subset Y$, and the edge $[a_2,a_3]$ gives another copy of $Y=k$. The collection of vertices $a_{1}$, $a_{2}$, and $a_{3}$, give rise to 
\begin{align*}
     X_{a_1}:= & ~X\cap \{y=c=0\}=\VV(x,y,c)=\{(0,0,0)\},\\
 X_{a_2}:= & ~X\cap \{x=c=0\}=\VV(x,c), \\ 
 X_{a_3}:= & ~X\cap \{x=y=0\}=\VV(x,y).
\end{align*}On the codomain side, $a_1$ and $a_2$ give copies of $\VV(c)\subset Y$, while $a_3$ gives a copy of $Y$. 

Hence the stratification $(X_\bullet, Y_\bullet)$ of $\pi$ is given by $X_\bullet=(X_2, X_1, X_0)$, $Y_\bullet=(Y_1, Y_0)$, where $X_2=X_A=X$, $$X_1= X_{[a_3,a_1]} \cup  X_{[a_2,a_1]} \cup  X_{[a_2,a_3]}\cup X_{a_2} \cup X_{a_3} =\VV(x,y)\cup \VV(x,c),$$ $X_0= X_{a_1}$,  $Y_1=Y=k$ and $Y_0=\VV(c)=0\subset k$.
Therefore, we in particular obtain the stratification $Y_\bullet=\{\emptyset\subset\{0\}\subset k\}$ of $k$. Our main result states that over each connected component of $k^*=k-0$ the map $\left.\pi\right|_{X}$ is a fibration.
\end{example}

\section{Proof of the Main Result and Associated Lemmas}\label{sec:decomp-linear}
This section is devoted to proving Theorem~\ref{thm:stratification_toric_projection}. In what follows, we retain all notations from~\S\ref{sec:main-thm+alg}. However, when considering a (toric) variety $X$, rather than working with closed sets when defining stratifications $X_\bullet$, and defining strata via set difference, we instead work explicitly with the resulting (connected) open strata. We will abuse notation slightly and write $S\in X_\bullet$ when $S$ is an open connected strata of $X$. 

Recall that $X\subset k^{m+n}$ is an affine toric variety with corresponding set of lattice points $\cA\subset\Z^{\nu}$. We shall also need to define some additional notations. In what follows, we fix a set $\cA:=\{a(1),\ldots,a(n+m)\}\subset\Z^{\nu}$, and consider the following sequence of functions
\begin{equation}\label{eq:sequence-1}
(k^*)^\nu\overset{\Phi_A}{\longrightarrow} (k^*)^{n+m}\overset{\pi}{\longrightarrow}  (k^*)^m,
\end{equation} where $\pi$ is a projection $(y_1,\ldots,y_{n+m})\longmapsto (y_{n+1},\ldots,y_{n+m})$, and $\Phi_A$ is the toric map $t\longmapsto(t^{a})_{a\in\cA}$.

Let $J\subset[n+m]$, and let $\HJs$ denote the sub-torus of $k^{n+m}$ given by:
\begin{align}\label{eq:torus}
\HJs := &~\{y:=(x,c)\in k^{n+m}~|~y_j=0\Longleftrightarrow j\in J\}.
\end{align} Note that $T_{\emptyset} = (k^*)^{n+m}$. Let $X^*$ denote the intersection $
X^*:=  X\cap T_{\emptyset}$, and consider the map 
\begin{align}\label{eq:map}
F:=\rest{\pi}_{X^*}: &~X^*\longrightarrow (k^*)^m,
\end{align} where $\pi:X\to k^m$ is the restriction of the projection $(x,c)\mapsto c$ to $X$.

We may now proceed with the proof of Theorem~\ref{thm:stratification_toric_projection}. A key ingredient in the proof of Theorem~\ref{thm:stratification_toric_projection} is the following result, which will be proven later in~\S\ref{sub:proof_thm2}.

\begin{proposition}\label{prp:fibration_torus}
The map $F$ is a $\cC^{\infty}$-fibration.
\end{proposition}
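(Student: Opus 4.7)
The plan is to exploit the Lie group structure on $X^*$ and deduce the fibration from the structure theory of Lie group homomorphisms. Both $(k^*)^{n+m}$ and $(k^*)^m$ (with $k^* = \CC^*$ or $k^* = \R_{>0}$) are connected abelian Lie groups under coordinate-wise multiplication, and the projection $\pi$ is a group homomorphism. Hence, once $X^*$ is identified as a Lie subgroup of $(k^*)^{n+m}$, the restriction $F = \pi|_{X^*}$ is automatically a Lie group homomorphism, and we can analyze it using the first isomorphism theorem in the Lie category.

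First I would establish the identification $X^* = \Phi_A((k^*)^{\nu})$: since $I_A$ is prime, $X$ is the Zariski closure of $\Phi_A((k^*)^{\nu})$ in $k^{n+m}$, and $\Phi_A((k^*)^{\nu})$ is already closed inside $(k^*)^{n+m}$ because the image of a homomorphism of algebraic tori is a closed subgroup; intersecting with $(k^*)^{n+m}$ recovers the image. Thus $X^*$ is a closed Lie subgroup of $(k^*)^{n+m}$. Next, the composition $F \circ \Phi_A : (k^*)^{\nu} \to (k^*)^m$ is the monomial map $t \mapsto (t^{a(n+1)}, \ldots, t^{a(n+m)})$ corresponding to the last $m$ columns of $A$, so its image $T := F(X^*) \subset (k^*)^m$ is likewise a closed Lie subgroup, and $F$ factors in the Lie category as
\[
X^* \twoheadrightarrow X^*/\ker(F) \xrightarrow{\;\sim\;} T \hookrightarrow (k^*)^m.
\]

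I would conclude by invoking the standard fact that a surjective Lie group homomorphism is a principal bundle with fiber $\ker(F)$, hence a locally trivial $\cC^{\infty}$-fibration onto its image $T$; the fibers of $F$ over points in $(k^*)^m \setminus T$ are empty, so the fibration condition holds trivially there. A concrete trivialization near any $c_0 \in T$ is obtained by choosing a smooth local section $s \colon U \to X^*$ of $F$ (which exists because $F$ is a submersion onto $T$) and defining the trivialization $F^{-1}(c_0) \times U \to F^{-1}(U)$, $(x,c) \mapsto x \cdot s(c) \cdot s(c_0)^{-1}$, using the group structure on $X^*$.

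The main point requiring care is verifying the middle isomorphism $X^*/\ker(F) \xrightarrow{\sim} T$ as a diffeomorphism uniformly in both settings: in the $k = \R_{\geq 0}$ case one passes through coordinate-wise logarithms, which turn $\Phi_A$ and $F \circ \Phi_A$ into the $\R$-linear maps given by $A^T$ and its last-$m$-columns submatrix, reducing the claim to the elementary statement that a surjective linear map of Euclidean spaces factors as a trivial linear bundle through its kernel; in the $k = \CC$ case, $\ker(F)$ may carry a finite component group, but it is still a closed complex Lie subgroup, and the needed smoothness of the induced map follows from the fact that a bijective morphism of smooth complex algebraic varieties in characteristic zero is an isomorphism.
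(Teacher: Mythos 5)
Your approach is genuinely different from the paper's, and (modulo one loose end shared with the paper itself) it is sound and arguably cleaner. The paper's proof proceeds in two stages: it first invokes the Jacobian calculation from the proof of Theorem~\ref{thm:stratification_toric_projection} to conclude $F$ is a submersion, then splits into cases. When $\nu\leq m$ it shows in Lemma~\ref{lem:f-proper} that $F$ is proper (finite-to-one if $\nu=m$; embedded in a larger proper toric map if $\nu<m$) and applies Thom's First Isotopy Lemma; when $\nu>m$ it constructs in Lemma~\ref{lem:diffeomorph} explicit diffeomorphisms $F^{-1}(1,\dots,1)\to F^{-1}(c)$ via a torus translation $x\mapsto s\cdot x$. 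You instead recognize $X^*=\Phi_A((k^*)^\nu)$ as a closed Lie subgroup of $(k^*)^{n+m}$, observe that $F$ is a Lie group homomorphism, and invoke the fact that a surjective Lie group homomorphism is a principal bundle over its image, with the trivialization written down explicitly using the group law and a local section. What your route buys: no case split on $\nu$ versus $m$; the submersion property comes for free from the constant-rank theorem plus Sard rather than from the separate Jacobian computation; and, importantly, you get \emph{local triviality} directly and explicitly. The paper's $\nu>m$ branch only asserts the fibers are pairwise diffeomorphic, which does not by itself imply local triviality — one needs the diffeomorphism (i.e.\ the scaling $s$) to depend smoothly on $c$ locally, a point the paper elides — whereas your group-theoretic trivialization makes this transparent. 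What the paper's route buys is that it is more elementary and self-contained, avoiding the structure theory of Lie groups and the first isomorphism theorem for algebraic groups.

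One caveat, which is a shared imprecision rather than a gap unique to you: your last paragraph dismisses the non-surjective case by saying the fibration condition ``holds trivially'' over $(k^*)^m\setminus T$. That is not quite right — if $T\subsetneq(k^*)^m$ is a proper closed subgroup, then near a point $c_0\in T$ every neighbourhood $U$ in $(k^*)^m$ meets the complement of $T$, and $F^{-1}(U)\cong F^{-1}(c_0)\times(U\cap T)$ is not diffeomorphic to $F^{-1}(c_0)\times U$; so $F:X^*\to(k^*)^m$ is a locally trivial fibration only over $T$, not over $(k^*)^m$. However, the paper's own Lemma~\ref{lem:f-proper} (in the $\nu=m$ case) and Lemma~\ref{lem:diffeomorph} implicitly assume every fibre $F^{-1}(c)$ is nonempty, i.e.\ that $F$ is surjective; this is the same hypothesis, so your proof is no weaker than the paper's in this respect. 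If you wish to be scrupulous you should either assume $F$ surjective or state the proposition as ``$F$ is a $\cC^\infty$-fibration onto its image $T$,'' matching the fact that the paper stratifies $\pi(X)$ rather than $k^m$.
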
 

\subsection{Proof of Theorem~\ref{thm:stratification_toric_projection}}\label{sub:proof_thm_main} We first introduce the following result, which follows directly from~\cite[Proposition 2.1.8]{CLO}. 

\begin{lemma}\label{lem:known_theorem}
If $X\cap \HJs\neq\emptyset$, then we have that
\begin{align*}
X\cap \HJs = &~\Phi_{\cB}((k^*)^q),
\end{align*} where $q:=\dim \Gamma$, and $\cB:=\Sigma\cap \Z^{\nu}$ for some face $\Gamma$ of $\conv{\cA}$ such that.
\end{lemma}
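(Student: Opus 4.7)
The plan is to deduce this lemma directly from the orbit--face correspondence cited in the statement, namely \cite[Proposition~2.1.8]{CLO}. That proposition decomposes $X=X_{\cA}$ as a disjoint union $X=\bigsqcup_{\Gamma}O_{\Gamma}$ of torus orbits indexed by the faces $\Gamma$ of $P:=\conv(\cA)$, and characterizes the orbits by the coordinate-support condition
\[
y\in O_{\Gamma}\ \Longleftrightarrow\ \{i\in[n+m]:y_i\neq 0\}=\{i\in[n+m]:a(i)\in\Gamma\}.
\]

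First, I would use this to identify the face $\Gamma$ attached to $J$. Because $X\cap\HJs$ is nonempty, pick any $y\in X\cap\HJs$; then $y$ lies in a unique orbit $O_{\Gamma}$, and comparing the support of $y$ with the definition \eqref{eq:torus} of $\HJs$ forces $\{i:a(i)\in\Gamma\}=[n+m]\setminus J$. This uniquely determines a face $\Gamma$ depending only on $J$, not on the chosen point $y$.

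Second, I would verify the equality $X\cap\HJs=O_{\Gamma}$ as sets. The inclusion $O_{\Gamma}\subseteq X\cap\HJs$ is immediate from the support condition just obtained. Conversely, any other point of $X\cap\HJs$ shares the same pattern of zero and nonzero coordinates, and hence, by the CLO characterization and uniqueness of the orbit containing a point, must also belong to $O_{\Gamma}$.

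Third, I would exhibit the parametrization $O_{\Gamma}=\Phi_{\cB}((k^*)^q)$. Setting $\cB:=\Gamma\cap\cA$, the orbit $O_{\Gamma}$ coincides with the open dense torus of the affine toric subvariety $X_{\cB}\subseteq X$ that sits inside the coordinate subspace cut out by $\{y_j=0:j\in J\}$. This open dense torus is precisely the image of the restricted toric map $t\mapsto(t^{a})_{a\in\cB}$, extended by zeros on the coordinates indexed by $J$. The matching of dimensions comes from the fact that the sublattice generated by $\{a-a_0:a\in\cB\}$ (for any fixed $a_0\in\cB$) has rank $q=\dim\Gamma$, so that although $\Phi_{\cB}$ is nominally defined on $(k^*)^{\nu}$, the induced map factors through a $q$-dimensional quotient torus, as claimed.

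The main obstacle I anticipate is this last dimension-matching step: one must carefully pass from the $\nu$-dimensional parametrizing torus appearing in the definition of $\Phi_{\cA}$ to the effective $q$-dimensional one, by identifying and quotienting out the kernel of the homomorphism $t\mapsto(t^{a-a_0})_{a\in\cB}$; this is the standard mechanism through which the affine dimension of the face $\Gamma$ governs the size of the orbit. With that in hand, the lemma follows.
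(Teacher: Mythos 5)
Your proof is correct and follows essentially the same route as the paper, which gives no argument at all for this lemma beyond the citation to \cite[Proposition~2.1.8]{CLO}; you have simply spelled out what that ``follows directly'' deduction consists of. The three steps you identify (extract the face $\Gamma$ from the common support pattern forced by the definition of $\HJs$, verify set equality with the orbit $O_\Gamma$, and parametrize $O_\Gamma$ by a $q$-dimensional torus) are exactly the content of the cited orbit--face correspondence, and your remark on the dimension-matching step correctly resolves the minor notational ambiguity in $\Phi_{\cB}((k^*)^q)$ (the map nominally starts from $(k^*)^\nu$, but factors through a rank-$q$ quotient because the differences $\{a-a_0 : a\in\cB\}$ span a rank-$q$ sublattice, $\cB$ containing all vertices of $\Gamma$).
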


The first Item of Theorem~\ref{thm:stratification_toric_projection} follows directly from Lemma~\ref{lem:known_theorem} and Proposition~\ref{prp:fibration_torus}.

Now, we show Item~\eqref{it:two_axioms}. Projecting each element in $X_\bullet$ gives rise to a decomposition $Y_\bullet$ of $\pi(X)$ that satisfies the first axiom in Definition~\ref{def:stratmap3}. 

To prove the second axiom of Definition~\ref{def:stratmap3}, we proceed as follows. Thanks to Lemma~\ref{lem:known_theorem}, it is enough to show that $\left. \pi\right|_{X\cap\HJs}$ is a submersion only for the case where $J=\emptyset$ (i.e., we only consider the stratum $X^*\in X_\bullet$). 

To show that $F$ is a submersion, we compute the Jacobian matrix of the pair $(X^*,~F)$. This is a $(m+r)\times (n+m)$--matrix $A(x)=[\sJX~|~M]$,  
where $\sJX$ is an $r\times (n+m)$--matrix, and $M$ is a $m\times (n+m)$--matrix, given as

\begin{tabular}{cccc}
$
\sJX :=	\begin{bmatrix}
{\partial f_1}/{\partial y_1} &\cdots& {\partial f_r}/{\partial y_1}\\
 \vdots& \ddots & \vdots  \\
	{\partial f_1}/{\partial y_{n+m}} &\cdots& {\partial f_r}/{\partial y_{n+m}}\\
	\end{bmatrix},
	$
& 
and
&

$
M := \begin{bmatrix}0 & \cdots &0 \\
\vdots & \ddots &\vdots \\
0 & \cdots &0 \\
1&\cdots &0 \\
\vdots &\ddots &\vdots \\
	0& \cdots &1 \\
	\end{bmatrix}.
$
\end{tabular}

Since $X^*\subset(k^*)^{m+n}$, the generators of $I(X^*)$ are binomials that can be expressed as 
\begin{align*}
f_i = &  ~y^{\beta(i)} - 1,~i=1,\ldots,r,
\end{align*} for some $\cB:=\{\beta(1),\ldots,\beta(r)\}\in\Z^{n+m}$. Therefore, we get
\begin{equation}\label{eq:sJX}
\sJX =	\begin{bmatrix}
{\beta_1(1)~y^{\beta(1)}}/{y_1}&\cdots& {\beta_1(r)~y^{\beta(r)}}/{y_1}\\
 \vdots& \ddots & \vdots  \\
{\beta_{m+n}(1)~y^{\beta(1)}}/{y_{m+n}}&\cdots& {\beta_{m+n}(r)~y^{\beta(r)}}/{y_{m+n}}\\
	\end{bmatrix} = y^{\beta(1)}\cdots y^{\beta(r)}/( y_1\cdots y_{m+n}) \cdot B,
\end{equation} where $\beta(i):=(\beta_1(i),\ldots,\beta_{n+m}(i))$, and $B$ is the matrix expression of $\cB$. Then, for all $y\in (k^*)^{n+m}$ the rank of $\sJX$ is independent of $y$. 

Hence, $A(x)$ has full rank for all $y\in (k^*)^{n+m}$. This yields the proof of Theorem~\ref{thm:stratification_toric_projection}.

\subsection{Very affine toric projections are fibrations}\label{sub:proof_thm2} This part is devoted to proving Proposition~\ref{prp:fibration_torus}. The following two key lemmas will be proven at the end of this section.

\begin{lemma}\label{lem:f-proper}
If $\nu\leq m$, then the map $F$ is proper.
\end{lemma}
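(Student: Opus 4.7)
The approach is to linearize properness via the coordinatewise logarithm, use the toric parametrization $\Phi_{\cA}$ to rewrite the preimage condition in linear-algebraic terms, and then invoke the hypothesis $\nu\leq m$ to secure injectivity of the relevant linear map. The core observation is that, for a surjective morphism of algebraic tori, properness is equivalent to the induced map on cocharacter lattices being injective, which in our situation is visible directly from the matrix $A$.

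Concretely, I would first pass to logarithmic coordinates: consider $\mathrm{Log}\colon (k^*)^{n+m} \to \R^{n+m}$, $y\mapsto (\log|y_1|,\dots,\log|y_{n+m}|)$. For $k = \R_{\geq 0}$ this is a homeomorphism, and for $k = \C$ it is proper modulo the compact fiber $(S^1)^{n+m}$. In either case, compactness of $F^{-1}(K)$ for a compact $K\subset (k^*)^m$ reduces to boundedness of $\mathrm{Log}(F^{-1}(K))\subset \R^{n+m}$. Then, using that $\Phi_{\cA}\colon (k^*)^{\nu}\twoheadrightarrow X^*$ is surjective, I would write every $y\in X^*$ as $y=\Phi_{\cA}(t)$ and set $s:=\mathrm{Log}(t)$, yielding the identity $\mathrm{Log}(y) = A^{T}s$, where $A\in \Z^{\nu\times (n+m)}$ has columns $a(1),\dots,a(n+m)$. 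The last $m$ components of $A^{T}s$ equal $B^{T}s$ with $B\in \Z^{\nu\times m}$ the submatrix of the last $m$ columns of $A$, so the condition $F(y)\in K$ becomes $B^{T}s \in \mathrm{Log}(K)$, a bounded subset of $\R^{m}$.

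To finish, I would argue that under the hypothesis $\nu\leq m$ (combined with the dimension equality $\dim X^* = \dim \pi(X^*) = \nu$, which forces $\mathrm{rank}(B)=\nu$), the linear map $B^{T}\colon \R^{\nu}\to \R^{m}$ is injective. Consequently, $B^{T}s$ bounded forces $s$ bounded; then $A^{T}s = \mathrm{Log}(y)$ is bounded as well, which gives boundedness of $\mathrm{Log}(F^{-1}(K))$ and hence compactness of $F^{-1}(K)$.

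\textbf{Main obstacle.} The genuinely subtle point is the justification of $\mathrm{rank}(B)=\nu$ from the hypothesis $\nu\leq m$ under the standing assumptions, since $\nu\leq m$ is only a necessary dimension constraint. The rank equality should follow from the toric setup—specifically, from primality of $I_A$ together with the standard dimension arithmetic for toric varieties, which ensures that the parameter columns of $A$ span $\R^{\nu}$ and that $\pi$ does not collapse $X^*$ onto a subtorus of strictly smaller dimension. Once this linear-algebraic fact is in place, the remainder is a routine amoeba-type computation in logarithmic coordinates, and the treatment of $k=\R_{\geq 0}$ and $k=\C$ is uniform.
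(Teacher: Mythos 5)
Your proof takes a genuinely different route from the paper's. The paper splits $\nu\leq m$ into two cases: for $\nu=m$ it argues that each fiber $F^{-1}(c)$ has the same finite cardinality $N$, by observing that the Jacobian of the monomial system $c_j = t^{a(n+j)}$ is a monomial scaling of a constant matrix, so $F$ is a finite map and hence proper; for $\nu<m$ it enlarges $\cA$ to a non-degenerate set $\cC\subset\Z^m$, applies the $\nu=m$ case to $X_\cC$, and restricts the resulting proper map to the closed subtorus $X_\cA\subset X_\cC$. Your argument instead passes to logarithmic (amoeba) coordinates and reduces properness to a single linear-algebraic fact: if $B^{T}s$ is bounded and $B^{T}$ is injective then $s$ is bounded, hence $A^{T}s=\mathrm{Log}(y)$ is bounded. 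This is more uniform (one argument covers both $\nu=m$ and $\nu<m$) and avoids the paper's more delicate claim that the preimage count is constant across $c$. Modulo the rank point discussed below, your argument is correct: $X^*$ is closed in $(k^*)^{n+m}$ as the image of a torus homomorphism, so $F^{-1}(K)$ is closed, and boundedness of $\mathrm{Log}(F^{-1}(K))$ then gives compactness.

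However, the ``main obstacle'' you flag is real and is \emph{not} closed by the justification you propose. Primality of $I_A$ and the fact that $\cA$ spans $\R^{\nu}$ do not imply that the parameter columns $a(n+1),\dots,a(n+m)$ span $\R^{\nu}$, so $\mathrm{rank}(B)=\nu$ does not follow from ``standard dimension arithmetic.'' Concretely, take $\nu=m=2$, $n=1$, $\cA=\{(0,1),(1,0),(2,0)\}$: then $I_A=\langle c_1^{2}-c_2\rangle$ is prime, $\cA$ spans $\R^2$, yet the two parameter columns span only a line, and $F^{-1}(c_1,c_1^{2})\cong k^*$ is non-compact, so $F$ is not proper. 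Thus $\mathrm{rank}(B)=\nu$ (equivalently, $\pi|_{X^*}$ is dominant onto $(k^*)^m$) is an additional hypothesis that must be stated, not a corollary of $\nu\leq m$ plus primality. To be fair, the paper's own proof leans on the very same unstated assumption: the sentence ``Since the original subset $\cA$ spans $\R^\nu$\ldots'' invokes spanning by all of $\cA$, where what is actually needed is spanning by the last $m$ columns alone. So this is a gap to repair in the lemma's hypotheses rather than a defect peculiar to your argument; once the dominance assumption is made explicit, your log-coordinate proof is complete.
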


\begin{lemma}\label{lem:diffeomorph}
If $\nu > m$, then for each $c\in (k^*)^m$, we have that $F^{-1}(c)$ is diffeomorphic to $F^{-1}(1,\ldots,1)$. 
\end{lemma}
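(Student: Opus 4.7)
The plan is to recognize $F$ as a homomorphism of commutative Lie groups, so that all nonempty fibers are cosets of $\ker F$ and are therefore pairwise diffeomorphic via left multiplication. Under componentwise multiplication, the parameterization $\Phi_A: (k^*)^\nu \to (k^*)^{n+m}$ is a group homomorphism, since $(st)^{a(i)} = s^{a(i)} t^{a(i)}$ for each column $a(i)$ of $A$. Its image coincides with $X^* = X \cap T_{\emptyset}$ and is a closed subgroup of $(k^*)^{n+m}$: in the complex case, images of algebraic torus morphisms are subtori; in the positive-real case, $\log$ identifies $(\R_{>0})^{n+m}$ with a vector space so that $\Phi_A$ becomes a linear map with linear (hence closed) image. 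The coordinate projection $\pi:(k^*)^{n+m}\to (k^*)^m$ is likewise a group homomorphism, so $F = \rest{\pi}_{X^*}$ is itself a homomorphism of Lie groups.

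Given this structural observation, the proof proceeds by coset translation. Set $K := F^{-1}(1,\ldots,1)$; since $\Phi_A(1,\ldots,1) \in K$, this fiber is nonempty and is the kernel of $F$ as a Lie group homomorphism. For any $c$ in the image of $F$, choose a preimage $\tau \in F^{-1}(c)$ and consider left multiplication
\[
L_\tau : X^* \longrightarrow X^*, \qquad y \longmapsto \tau \cdot y,
\]
which is a smooth diffeomorphism with smooth inverse $L_{\tau^{-1}}$. Because $F$ is a homomorphism, $F(\tau \cdot y) = c \cdot F(y)$, so $L_\tau$ restricts to a diffeomorphism from $K = F^{-1}(1,\ldots,1)$ onto the coset $F^{-1}(c)$, yielding the claim.

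The hypothesis $\nu > m$ separates this regime from the properness case of Lemma~\ref{lem:f-proper} and further guarantees that the kernel of $G := \pi \circ \Phi_A$ has positive dimension, so that $K$ is a genuinely nontrivial Lie subgroup (and the fibration built from it is not merely a discrete covering). For $c \in (k^*)^m$ not in the image of $F$ the fiber is empty, and the statement should be read either vacuously or as applying to $c \in F(X^*)$; in the situation of Proposition~\ref{prp:fibration_torus} only the latter $c$ are relevant.

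The main obstacle I anticipate is justifying the identification $X^* = \Phi_A((k^*)^\nu)$ and the closedness of this image in $(k^*)^{n+m}$ uniformly across $k=\C$ and $k=\R_{\geq 0}$, since this is what legitimizes treating $F$ as a Lie group homomorphism. Everything else — the coset description of fibers and the smoothness of translation — is a direct consequence of standard Lie group theory, so once this identification is in place the argument is essentially automatic and does not require any further analytic or combinatorial input beyond what the toric hypothesis on $X$ already provides.
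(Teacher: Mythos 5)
Your argument is correct and, at its core, is the same translation-by-a-fiber-point device that the paper uses, but packaged more robustly in Lie-theoretic language. The paper writes $I(F^{-1}(c))$ as a binomial ideal $\langle x^{\alpha(i)} - \varphi_i(c)\, x^{\beta(i)} \rangle$ and seeks a scaling $s \in (k^*)^n$ with $s^{\alpha(i)-\beta(i)} = \varphi_i(c)$ whose multiplicative action carries $F^{-1}(1,\ldots,1)$ onto $F^{-1}(c)$; this $s$ is exactly the $x$-part of your translation element $\tau$. Your formulation has a concrete advantage: the existence of the translation is automatic (pick any $\tau\in F^{-1}(c)$), whereas the paper justifies the solvability of the monomial system for $s$ by counting unknowns against equations, a step which as written really needs the exponent vectors $\alpha(i)-\beta(i)$ to be $\Q$-linearly independent; observing, as you effectively do, that one may take $s$ to be a coordinate of a point in $F^{-1}(c)$ closes this gap. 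You also make explicit the standing assumption $F^{-1}(c)\neq\emptyset$, which the paper leaves implicit and which is the correct reading for the application to Proposition~\ref{prp:fibration_torus}. The one spot in your write-up that deserves the extra care you already flag is the identification $X^*=\Phi_A((k^*)^\nu)$ and its closedness in $(k^*)^{n+m}$; your justifications (the image of a morphism of algebraic tori is a closed subtorus over $\C$, and $\log$ linearizes the situation over $\R_{>0}$) are both correct.
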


\begin{proof}[Proof of Proposition~\ref{prp:fibration_torus}]
    Since the proof of Item~\eqref{it:two_axioms} of Theorem~\ref{thm:stratification_toric_projection} does not require Proposition~\ref{prp:fibration_torus} (see~\S\ref{sub:proof_thm_main}), we can use it here. Then, we have that $F$ is a submersion. Consequently, thanks to Lemma~\ref{lem:f-proper}, we can apply Thom's First Isotopy Lemma to deduce that $F$ is a locally trivial fibration whenever $\nu\leq m$. Furthermore, Lemma~\ref{lem:diffeomorph} implies that $F$ is a locally trivial fibration whenever $\nu > m$. This yields the proof of Proposition~\ref{prp:fibration_torus}.
\end{proof} 

\begin{proof}[Proof of Lemma~\ref{lem:f-proper}]

For every $c\in (k^*)^m$, we have
\begin{multline}\label{eq:pre-image}
F^{-1}(c) = \{
	(x,~c)\in (k^*)^n\times(k^*)^m~|~\exists~t\in (k^*)^\nu,~x_i ~=~t^{a(i)}\text{ for } i=1,\ldots,n	\text{, and } \\
c_j ~=~t^{a(n+j)}\text{ for } j=1,\ldots,m\}.
\end{multline}
Assume first that $\nu=m$. On the one hand, there exists $N\in\N$ such that for each $c\in(k^*)^m$, the system 
\begin{align}\label{eq:sys-c_t}
c_j = & ~t^{a(n+j)}\text{ for } j=1,\ldots,m,
\end{align} has $N$ solutions $t\in(k^*)^m$. Indeed, the Jacobian matrix in $t$ of this system is a matrix $\hat{A}$ scaled by a monomial in $t$, and the rows are vectors $a(i)$ (see e.g., proof of Theorem~\ref{thm:stratification_toric_projection}). Since the original subset $\cA\subset\Z^{\nu}$ spans the space $\R^{\nu}$, and the values $c_i$ are non-negative for $k=\R_{\geq 0}$, the number of solutions $t\in(k^*)^m$ of the system~\eqref{eq:sys-c_t} is finite and independent of $c$.

On the other hand, the Jacobian matrix in $t$ to the system 
\begin{align}\label{eq:sys-x_t}
x_i = & ~t^{a(i)}\text{ for } i=1,\ldots,n,
\end{align} has also full rank due to the same reasons.  

Therefore, the $N$ solutions to~\eqref{eq:sys-c_t} give rise to exactly $N$ distinct points $x\in (k^*)^n$. Hence, we get $\# F^{-1}(c)=N$ for any $c\in (k^*)^m$. This implies that $F$ is finite, hence, proper.

Assume now that $\nu < m$. Then, consider a toric map $\Phi_{\cB}:(k^*)^\nu\times (k^*)^{m-\nu}\longrightarrow (k^*)^{n+m}$, $(t,~s)\longmapsto (t^{a(1)}s^{b(1)},\ldots,t^{a(n+m)}s^{b(n+m)})$, for some subset $b(1),\ldots,b(n+m)\in\Z^{m-\nu}$. We may choose $\cB$ so that the subset 
\[
\cC:=\{(a(1),b(1)),\ldots,(a(n+m),~b(n+m))\}\subset\Z^{\nu}\times\Z^{m-\nu}
\] is non-degenerate. Then, the following inclusion holds
\[
X_{\cA}:=\Phi_{\cA}((k^*)^\nu) ~=~ \Phi_{\cB}((k^*)^\nu\times\{(1,\ldots,1)\})~\subset~ \Phi_{\cC}((k^*)^m)=:X_{\cC}.
\] From the first part of this proof, we have that $\left. \pi\right|_{X_{\cC}}$ is proper. Then, for any affine subvariety $S\subset X_{\cC}$, the map $\rest{\pi}_{S}$ is also proper. Since $X_{\cA}\subset X_{\cC}$, it follows that $F$ is proper.
\end{proof}

\begin{proof}[Proof of Lemma~\ref{lem:diffeomorph}]
Fix any point $c\in (k^*)^m$. From the expression of $F^{-1}(c)$ in~\eqref{eq:pre-image}, we have the following relation
\begin{align*}
I(F^{-1}(c)) = &~ \left\langle x_1 - t^{a(1)},\ldots,~x_n - t^{a(n)},~c_1 - t^{a(n+1)},\ldots,~c_m - t^{a(n+m)}\right\rangle\cap k[x_1,\ldots,x_n]. 
\end{align*} Hence, this ideal is generated by binomials in $x$. Namely, there is a subset 
\[
\{\alpha(1),\beta(1),\ldots,\alpha(\nu-m),\beta(\nu-m)\}\subset\N^n
\] and monomial functions $\varphi_1,\ldots,\varphi_{\nu-m}:(k^*)^m\longrightarrow k^*$ satisfying
\begin{align*}
I(F^{-1}(c)) = &~ \left\langle x^{\alpha(1)} - \varphi_1(c)~x^{\beta(1)},\ldots,~x^{\alpha(\nu -m)} - \varphi_{\nu -m}(c)~x^{\beta(\nu -m)}\right\rangle. 
\end{align*}
Let $s\in (k^*)^n$, such that we have
\begin{align*}
s^{\alpha(i) - \beta(i)} = & ~\varphi_i(c),\quad i=1,\ldots,\nu - m.
\end{align*} The existence of $s$ is a result of solving the above system of monomial equations in $n$ variables with $n>\nu - m$; if $k=\R_{\geq 0}$, such solution will be in the positive orthant. Then, it holds that
\begin{align*}
(s\cdot x)^{\alpha_i} - (s\cdot x)^{\beta_i} = 0~\Longleftrightarrow~x^{\alpha_i} - \varphi_i(c)~ x^{\beta_i} = 0,\quad i=1,\ldots,\nu - m,
\end{align*} where $s\cdot x := (s_1x_1,\ldots,~s_nx_n)$. We conclude that the automorphism $x\mapsto s\cdot x$ sends $F^{-1}(1,\ldots,1)$ to $F^{-1}(c)$.
\end{proof}

\bibliographystyle{abbrv}
	\bibliography{library}
\end{document}